\newtheorem{theorem}{Theorem}[section]
\newtheorem{cor}[theorem]{Corollary}
\newtheorem{definition}[theorem]{Definition}
\newtheorem{lemma}[theorem]{Lemma}
\newtheorem{prop}[theorem]{Proposition}
\newtheorem{remark}[theorem]{Remark}
\begin{document}

\title[A Piecewise Contractive Map on Triangles]{A Piecewise Contractive Map on Triangles}
\author[Samuel Everett]{Samuel Everett}

\begin{abstract}
We study the dynamics of a piecewise map defined on the set of three pairwise nonparallel, nonconcurrent lines in $\mathbb{R}^2$.  The geometric map of study may be analogized to the billiard map with a different reflection rule so that each iteration is a contraction over the space, thereby providing asymptotic behavior of interest.  Our study emphasizes the behavior of periodic orbits generated by the map, with description of their geometry and bifurcation behavior.  We establish that for any initial point in the space, the orbit will converge to a fixed point or periodic orbit, and we demonstrate that there exists an infinite variety of periodic orbits the orbits may converge to, dependent on the parameters of the underlying space.
\end{abstract}

\maketitle
\noindent {\bf MSC 2010 Classification:} {37E05, 37G15, 51N20}

\graphicspath{ {Diagrams/} }

\section{Introduction}

In \cite{wang}, Wang and Zhang showed that particular classes of iterated function systems can provide robust dynamics when defined on the boundary of particular triangular plane regions.  From such work, it is natural to ask if different systems may provide similarly interesting asymptotic behavior when defined over boundaries of triangular regions.  

The purpose of this note is to study the asymptotic properties of orbits generated by a piecewise linear map, defined on the space of three pairwise nonparallel, nonconcurrent lines in $\mathbb{R}^2$.  In many cases, we find that such dynamics reduce to simply studying the properties of the orbits contained in Euclidean triangles.  

Our main result demonstrates how iteration of the map defined here is asymptotically periodic in the space, or will ``degenerate" to a fixed point.  We show that the dynamics of the map are not determined by the parameters defining the map, but rather the parameters of the intersecting lines composing the space.

This paper is organized as follows.  Section \ref{sec1} introduces the fundamental geometric properties and dynamics of the system.  Section \ref{sec2} shows that for every set of three pairwise nonparallel, nonconcurrent lines in $\mathbb{R}^2$, the map of study will converge to a periodic orbit or fixed point.  Section \ref{sec3} focuses on the dynamics of the map restricted to boundaries of isosceles triangles, with discussion of bifurcation points and associated bifurcation diagram.  

\subsection{Acknowledgments}
The author is grateful to Professor Nicolai Haydn (USC) for guiding me and this project along during my first year of university, and more generally for the incredible mentorship and for teaching me a great deal of mathematics.

\section{Definitions and basic properties}\label{sec1}

Let $L_1, L_2, L_3$ be any set of pairwise nonparallel, nonconcurrent lines in $\mathbb{R}^2$, and let $X = L_1 \cup L_2 \cup L_3$.  As such, we can put $\{L_1 \cap L_2, L_1 \cap L_3, L_2 \cap L_3\} = \{A, B, C\}$ with $A, B, C$ distinct, so that $\bigtriangleup ABC$ is a Euclidean triangle. Call this triangle $Q$, and let $\partial Q$ denote the boundary of the triangle so that $\partial Q \subset X$.

\begin{definition}\label{def1}
Let $T:X \rightarrow X$ be defined so that, if $x_0 \in L_i$ for $i=1, 2, 3$, and $x_1$ denotes the perpendicular projection of $x_0$ onto line $L_j$, and $x_2$ denotes the perpendicular projection of $x_0$ onto line $L_k$ for $i, j, k$ distinct, then $T(x_0) \in \{x_0, x_1, x_2\}$ such that if $d(x_1, x_0) > d(x_2 , x_0)$ then $T(x_0) = x_1$, and if  $d(x_1, x_0) = d(x_2, x_0)$, then $T(x_0) = x_0$.
\end{definition}

Figure \ref{fig1} offers illustration of what iteration of the map $T$ looks like in a triangle, visually demonstrating the kind of behavior this paper explores.  We let $T^n$ denote the $n$th iteration of $T$, such that $T^1(x) = T(x)$, and $T^n(x) = T(T^{n-1}(x))$. For any $x \in X$, we let $\mathcal{O}(x)$ denote the \emph{orbit of x} under $T$, so that $\mathcal{O}(x) = \{x, T(x), T^2(x), T^3(x), ...\}$ is the set of points obtained by iterating $T$ with initial value $x$.  Furthermore, if $T(x) = x$, we call $x$ a \emph{fixed point}.  

Let $\Omega$ denote the set of all fixed points and all preimages of $T$ from fixed points in $X$.  That is, if $h \in X$ is a fixed point, then $\{h, T^{-1}(h), T^{-2}(h), T^{-3}(h), ...\} \subset \Omega$.  We label $X' = X \setminus \Omega \cup \{A, B, C\}$ for line intersection points $A, B, C$. 

\begin{figure}
\centering
\begin{minipage}[t]{5cm}
    \hspace*{.4cm}
	\includegraphics[scale=.26]{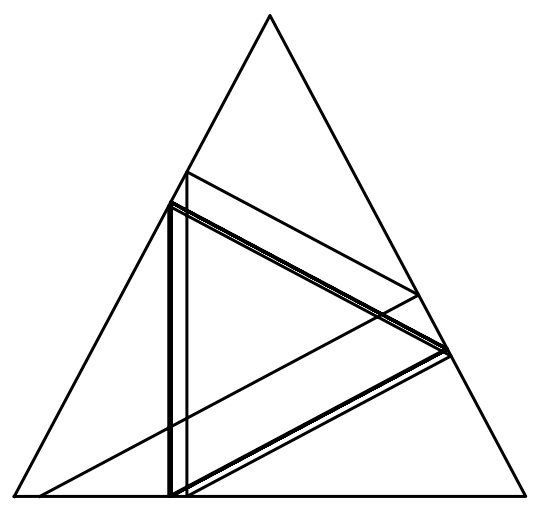}
\end{minipage}
\hspace{3cm}
\begin{minipage}[t]{4cm}
    \hspace*{0cm}
	\centering
	\includegraphics[scale=.28]{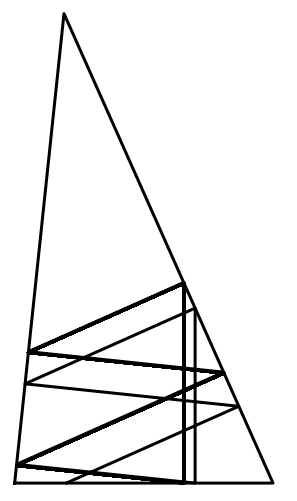}
\end{minipage}
\caption{The result of 100 iterations of the map $T$ in two different triangles}
\label{fig1}
\end{figure}

\begin{definition}
For interior angle $\theta$ in $Q$, we call point $H$ a \emph{bisector point}, if $H$ is the point of intersection between the angle bisector of $\theta$ and the edge opposite $\theta$.
\end{definition}

It is clear that all bisector points are fixed points under $T$.  Note that the set of bisector points need not constitute the entire set of fixed points however: there may exist two other fixed points for each line, exterior to the boundary of the triangle.  Without loss of generality take $x_1 \in L_1, x_2 \in L_2, x_3 \in L_3$ so that $x_2, x_3$ are the perpendicular projections of $x_1$ onto $L_2, L_3$ with $x_1 \not\in \partial Q$.  Then given $L_1$, one may easily choose $L_2, L_3$ such that $d(x_1, x_2) = d(x_1, x_3)$.  We note that because the lines $L_1, L_2, L_3$ are pairwise nonparallel and nonconcurrent, there exists no space $X$ with an infinite number of fixed points.

\begin{lemma}\label{lem2}
For all $x \in X'$, successive preimages of $T$ over $x$ diverge, and successive images of $T$ over $x$ do not diverge.  
\end{lemma}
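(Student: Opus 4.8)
The plan is to track a single scalar, the distance $\rho(y) = d(y,O)$ from a fixed reference point $O$ (say the centroid of $Q$), and to exploit the fact that, away from the finitely many equidistant points, the map $T$ of Definition~\ref{def1} acts on each line as a genuine perpendicular projection onto another line. The central computation I would record first is elementary: if $y \in L_i$ and $T(y)$ is its perpendicular foot on $L_j$, then in the right triangle with vertices $y$, $T(y)$ and $P_{ij} := L_i \cap L_j$ one has $d(T(y),P_{ij}) = d(y,P_{ij})\,|\cos\theta_{ij}|$, where $\theta_{ij}$ is the angle between $L_i$ and $L_j$. Since the lines are pairwise nonparallel, $\lambda := \max_{i<j}|\cos\theta_{ij}| < 1$. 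Restricted to the two lines, each branch of $T$ is thus an affine contraction of ratio $|\cos\theta_{ij}|$ fixing the shared vertex $P_{ij}$, and its inverse an affine expansion of ratio $1/|\cos\theta_{ij}| \ge 1/\lambda > 1$. This one fact drives both halves of the lemma.

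For the images I would first show $T$ contracts $\rho$ outside a large disk. Writing a far-out point of $L_i$ as $y = p + s u_i$ with $|s|$ large, the distance from $y$ to $L_j$ is $|c_j + s\,(u_i\!\cdot n_j)| = |s|\,|\sin\theta_{ij}| + O(1)$, so for $|s|$ large the selection rule picks the fixed line maximizing $|\sin\theta_{ij}|$ and $T$ is a genuine projection there, with $\rho(T(y)) = |s|\,|\cos\theta_{ij}| + O(1)$. Hence there exist $R$ and $\lambda' \in (\lambda,1)$ with $\rho(T(y)) \le \lambda'\rho(y)$ whenever $\rho(y) > R$. Since $X \cap \{\rho \le R\}$ is compact and perpendicular feet of points in a bounded set stay bounded, $R' := \sup\{\rho(T(y)) : \rho(y)\le R\}$ is finite. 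If $\rho(T^n x)\to\infty$, then for all large $n$ we would have $\rho(T^n x) > R$ and hence $\rho(T^{n+1}x) \le \lambda'\rho(T^n x)$, forcing $\rho(T^n x)\to 0$, a contradiction; so the forward images do not diverge, for every $x\in X$.

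For the preimages I would argue in reverse along a backward branch $y_0 = x$, $T(y_n)=y_{n-1}$. The dichotomy above already does most of the work: if ever $\rho(y_{n-1}) > R'$, then no preimage can satisfy $\rho(y_n)\le R$ (else $\rho(y_{n-1})\le R'$), so $\rho(y_n) > R$ and the forward contraction gives $\rho(y_n)\ge \rho(y_{n-1})/\lambda' > R'$; inductively $\rho(y_n)$ grows by a factor $\ge 1/\lambda'$ thereafter and diverges. It remains only to show a branch cannot stay in the core $\{\rho\le R'\}$ forever. Here I would invoke the reverse of the first step: each backward step multiplies the distance to the active shared vertex by $1/|\cos\theta_{ij}|\ge 1/\lambda$. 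A branch alternating between only two lines keeps the same shared vertex $P_{ij}$ at every step, so $d(y_n,P_{ij})$ blows up and the branch leaves the core; hence a bounded branch must cycle through all three lines. For such a branch the cyclic composition $L_i\to L_j\to L_k\to L_i$ is an affine self-contraction of $L_i$ of ratio $|\cos\theta_{ij}\cos\theta_{jk}\cos\theta_{ki}| < 1$ with a unique fixed point $p^\ast$, the trace on $L_i$ of a periodic orbit; reading the branch backward expands arclength away from $p^\ast$ by the reciprocal factor, so the branch diverges unless it sits exactly on that periodic orbit.

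I expect this last point to be the main obstacle, and it is where the hypothesis $x \in X'$ enters. The delicacy is purely the bookkeeping of which vertex is \emph{active}: the per-step expansion is against a vertex that changes along the orbit, so boundedness is not contradicted by watching a single vertex, and one must pass to the cyclic composition to extract a genuine, direction-free expansion. The only bounded backward branch surviving this analysis is a periodic cycle through some $p^\ast$; excluding the fixed points and their preimages via $\Omega$ removes the degenerate cases, and under the natural reading of the statement the preimage sets $T^{-n}(x)$ escape to infinity because every branch off such a cycle diverges. I would then finish by concluding that for $x\in X'$ every such backward branch eventually exceeds radius $R'$ and hence diverges, while the forward images remain bounded.
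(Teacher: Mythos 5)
Your per-step computation---the right triangle at the shared vertex giving $d(T(y),P_{ij}) = |\cos\theta_{ij}|\,d(y,P_{ij})$---is exactly the paper's entire proof: the paper reduces to two intersecting lines, observes that $\overline{xz}$ is the hypotenuse of the right triangle $\bigtriangleup x\,T(x)\,z$, concludes that forward images contract toward the intersection point while preimages expand, and stops there. Everything you build on top of that kernel is genuinely new relative to the paper: the escape radius $R$ with uniform ratio $\lambda'$, the finite bound $R'$ on images of the core, the geometric growth of any backward branch that ever exits the core, and the exclusion of branches eventually alternating between two lines. This added structure addresses the real weakness of the published proof, which you correctly identify: both the contraction and the expansion at each step are measured against a vertex that \emph{changes} along the orbit, so neither boundedness of forward orbits nor divergence of backward branches follows from the two-line picture alone. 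Your forward half is complete and rigorous, and strictly stronger than what the paper writes.

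The backward endgame, however, has a genuine gap, at exactly the point you flag. First, a bounded branch visiting all three lines need not eventually follow the cyclic itinerary $L_i \to L_j \to L_k \to L_i$: itineraries such as $i,j,i,k,j,k,\ldots$ are admissible (only immediate repeats are forbidden) and are not powers of your single affine composition, so ``reading the branch backward expands arclength away from $p^\ast$'' does not apply as stated. This part is repairable with tools you already have: any $n$-fold backward composition along \emph{any} itinerary expands distances by at least $\lambda^{-n}$, so for each fixed infinite itinerary the set of base points admitting a bounded branch has diameter $O(R'\lambda^n)$ for every $n$, hence contains at most one point, whose entire backward branch lies in the compact core. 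Second, and not repairable within the statement as given: such exceptional points genuinely exist. Any periodic point $p$ of period $m>1$---and the paper's Theorem \ref{thm2} shows these are the generic attractors---satisfies $p \in T^{-m}(p)$, so its preimage sets never escape to infinity; yet $p \in X'$, because $\Omega$ contains only fixed points and their preimages, not periodic points of higher period. Your closing appeal to $X'$ to ``remove the degenerate cases'' therefore does not go through, and neither does the reading that the sets $T^{-n}(x)$ escape to infinity. To be fair, the paper's own proof never confronts this case at all, so your analysis is sharper than the original; but a complete argument must either weaken the conclusion at periodic points (every backward branch \emph{off the cycle} diverges) or enlarge the excluded set $\Omega$ to include all periodic orbits, not just fixed points.
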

\begin{proof}
To prove this claim, it is sufficient to show that for any two intersecting lines $L_1, L_2$ in $\mathbb{R}^2$, successive images of $T$ over an initial point $x \in L_1$ converge to the intersection point, while successive preimages from the same point diverge.  Let $L_1, L_2$ intersect at point $z$ with intersection angle $\theta$, $0 < \theta \leq \pi/2$.  Without loss of generality, let $x \in L_1$, so that $T(x) \in L_2$.  Then $\bigtriangleup xT(x)z$ is a right triangle with right angle $\angle xT(x)z$, so that the hypotenuse of $\bigtriangleup xT(x)z$ is $\overline{xz}$.  But because the hypotenuse of the right triangle is $\overline{xz}$, we have that $d(x, z) > d(T(x), z)$, and images of $T$ over $x$ converge to $z$.  It follows that successive images of $T$ do not diverge, but preimages diverge in the space $X$.
\end{proof}

\begin{definition}
Let $U$ label a set of open intervals, where for $I_a \in U$, $I_a \subset X$, and the boundary values of each $I_a$ are given by elements of $\Omega$, so that for all $\omega \in \Omega$, $\omega \not \in I_a$.  
\end{definition}

As such, for $I_k \in U$, we can put $I_k = (\omega_i, \omega_j)$ for adjacent, colinear $\omega_i, \omega_j \in \Omega$.

\begin{lemma}\label{lem7}
For all $x \in I_a$ with $I_a \in U$, $\mathcal{O}(x) \subset \bigcup_{i=0}^m I_i$ for finite $m$.
\end{lemma}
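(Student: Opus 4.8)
The plan is to combine the combinatorial structure of the intervals in $U$ with the metric contraction already proved in Lemma \ref{lem2}. I will organize the argument around three facts: that the complement $X \setminus \Omega$ is forward invariant, that $T$ carries each interval of $U$ into a single interval of $U$, and that the forward orbit is bounded while $\Omega$ is locally finite.

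First I would record the forward invariance of $X \setminus \Omega$. By the definition of $\Omega$ as the union of the fixed points together with all of their $T$-preimages, one has $T^{-1}(\Omega) \subseteq \Omega$: if $T(y) = \omega$ and $\omega$ is a fixed point or a preimage of one, then $y$ is itself a preimage of a fixed point, hence $y \in \Omega$. Contrapositively, $y \notin \Omega$ forces $T(y) \notin \Omega$, so the forward orbit of any $x \in I_a \subset X \setminus \Omega$ never meets $\Omega$. Next I want the interval-level version of this statement, namely that $T$ maps each $I_a \in U$ into a single $I_b \in U$. The key observation is that in Definition \ref{def1} the choice of which line to project onto can only switch at a point equidistant from the two target lines, and such a point is by definition fixed, hence lies in $\Omega$ and cannot lie in the open interval $I_a$. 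Thus $T$ restricted to $I_a$ is a single perpendicular projection, an affine map, so $T(I_a)$ is a connected subset of one of the lines. Its image meets no point of $\Omega$: if it contained some $\omega \in \Omega$, the affine preimage of $\omega$ in $I_a$ would itself be a preimage of a fixed point, hence an element of $\Omega$ inside $I_a$, a contradiction. Therefore $T(I_a)$ lies in a single connected component of $X \setminus \Omega$, that is, a single $I_b$, and the orbit descends to a trajectory $I_{a_0}, I_{a_1}, I_{a_2}, \dots$ in the interval-labelled symbolic space.

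Finally I would show that only finitely many intervals occur. By Lemma \ref{lem2} successive images do not diverge, so $\mathcal{O}(x)$ is a bounded subset of $X$, contained in a bounded region $R$. Again by Lemma \ref{lem2} successive preimages diverge; since the fixed-point set is finite, the preimages of the fixed points accumulate only at infinity, and hence $\Omega$ is locally finite, meaning any bounded region contains only finitely many of its points. Consequently $R$ meets only finitely many of the intervals of $U$. Since every orbit point lies in $X \setminus \Omega$ and therefore in some interval of $U$, and each visited interval $I_{a_n}$ contains the orbit point $T^n(x) \in R$ and so meets $R$, the orbit can visit only finitely many intervals. Relabelling these as $I_0, \dots, I_m$ yields $\mathcal{O}(x) \subset \bigcup_{i=0}^m I_i$.

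I expect the delicate step to be ruling out the orbit drifting through infinitely many distinct intervals. The non-divergence of images alone does not bound the number of intervals, since $U$ is typically infinite; the argument must marry boundedness of the orbit with the local finiteness of $\Omega$, and the latter rests essentially on the divergence of preimages together with the finiteness of the fixed-point set. I would also take care that ``does not diverge'' genuinely yields a bounded orbit, and that the ambiguous vertices $A, B, C$ cause no trouble, as they are excluded from $X'$ and are not limit points of the orbit.
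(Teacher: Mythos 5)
Your proof is correct and takes essentially the same route as the paper's: non-divergence of forward images (Lemma \ref{lem2}) confines the orbit to a bounded region, while divergence of preimages together with finiteness of the fixed-point set makes $\Omega$ locally finite, so the bounded orbit can meet only finitely many intervals of $U$. Your second paragraph (that $T$ carries each $I_a$ into a single $I_b$) is not actually needed for this lemma --- it is precisely the paper's Lemma \ref{lem4}, stated and proved separately afterward --- though your equidistance argument for it is sound and arguably cleaner than the paper's.
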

\begin{proof}
This follows immediately from Lemma \ref{lem2}: if successive iterations of $T$ over $x$ do not diverge, then the orbit must be confined in a bounded region, which is necessarily composed of a finite number of intervals because preimages of $T$ from a point diverge, and the boundary values of all intervals are preimages of $T$ from the set of fixed points, and there may only ever be a finite number of fixed points.
\end{proof}

\begin{lemma}\label{lem4}
Let $I_a \in U$.  Then there exists an $I_b \in U$ such that $T(I_a) \subset I_b$.
\end{lemma}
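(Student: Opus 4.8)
The plan is to combine two observations: that $T$ acts on $I_a$ as a single perpendicular projection, hence continuously, sending $I_a$ into a connected subset of one line; and that $\Omega$ is backward invariant under $T$, so that the image of $I_a$ avoids $\Omega$ altogether. A connected set lying on a single line and missing $\Omega$ must sit inside one component of the complement of $\Omega$, which is by construction a member of $U$.

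First I would record that $\Omega$ is backward invariant, i.e. $T^{-1}(\Omega) \subset \Omega$. This is immediate from the definition of $\Omega$: if $T(y) = \omega$ and $T^n(\omega) = h$ for some fixed point $h$, then $T^{n+1}(y) = h$, so $y$ is again an iterated preimage of a fixed point and thus $y \in \Omega$; the case of $\omega$ itself fixed is the defining clause. Consequently no point of $I_a$ can map into $\Omega$, for otherwise that point would itself lie in $\Omega$, contradicting $I_a \cap \Omega = \emptyset$. Hence $T(I_a) \cap \Omega = \emptyset$.

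Next I would show that $T|_{I_a}$ is a single perpendicular projection. The only points of a line at which the choice between the two candidate projections $x_1, x_2$ can change are those where $d(x_1, x_0) = d(x_2, x_0)$; by Definition \ref{def1} these are precisely the fixed points, all of which lie in $\Omega$ and therefore outside the open interval $I_a$. Since $x_0 \mapsto d(x_1, x_0) - d(x_2, x_0)$ is continuous along the line containing $I_a$ and vanishes nowhere on $I_a$, its sign is constant there, so there is a single target line $L_j$ with $T(x_0)$ equal to the perpendicular projection of $x_0$ onto $L_j$ for every $x_0 \in I_a$. Perpendicular projection of one line onto another is affine and continuous, so $T(I_a)$ is a connected subinterval of $L_j$. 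It then only remains to note that $T(I_a)$ is connected, contained in the single line $L_j$, and disjoint from $\Omega$; since the points of $\Omega$ on $L_j$ are discrete (they are the finitely many fixed points together with their preimages, which diverge by Lemma \ref{lem2}), $L_j \setminus \Omega$ is the disjoint union of the members of $U$ lying on $L_j$, and a connected subset of such a union lies in exactly one of them, giving the desired $I_b \in U$ with $T(I_a) \subset I_b$.

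I expect the main obstacle to be the middle step: one must be certain that the projection target cannot switch inside $I_a$, which rests on identifying the switching locus $\{d(x_1,x_0)=d(x_2,x_0)\}$ with the fixed point set and hence with a subset of $\Omega$. A mild complication is the behavior at a triangle vertex lying in $I_a$, where the relevant distance function has a corner; but since such a corner is not a zero of the difference of distances, the sign, and thus the projection target, still does not change there, so $T|_{I_a}$ remains a single continuous projection and the argument goes through.
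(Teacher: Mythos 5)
Your proof is correct, and at bottom it runs on the same two facts as the paper's own argument: that $T(I_a)$ is connected, and that $\Omega$ is backward invariant, so no point of $I_a$ can map onto an interval endpoint. The paper packages this as a contradiction: if $T(I_a)$ met two distinct intervals $I_b, I_c$, connectedness would force the image through their shared boundary point in $\Omega$, whose preimage would then lie in $I_a$ --- impossible, since intervals contain no preimages of points of $\Omega$. You run the identical mechanism directly (the image misses $\Omega$, the image is connected, hence it lies in a single component of the complement of $\Omega$), which is just the contrapositive form. What your write-up genuinely adds, and what the paper silently assumes, is the justification that $T|_{I_a}$ is a single continuous perpendicular projection: you identify the switching locus $\{x_0 : d(x_1, x_0) = d(x_2, x_0)\}$ with the fixed-point set via Definition \ref{def1}, hence with a subset of $\Omega$ that the open interval avoids, and you even note that a triangle vertex interior to $I_a$ (vertices need not belong to $\Omega$) only produces a corner, not a sign change, in the difference of distances. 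Without some such step the connectedness of $T(I_a)$ --- used implicitly when the paper asserts that $I_b$ and $I_c$ ``are adjacent and share a boundary value'' --- has no justification, so your version is the more complete one. One caveat you share with the paper rather than introduce: the claim that the components of $L_j \setminus \Omega$ are exactly the members of $U$ needs $\Omega$ to be discrete on each line, and your appeal to Lemma \ref{lem2} only shows that successive preimages along a single backward orbit diverge, which does not by itself exclude accumulation points of the full set $\Omega$; the paper leaves this equally informal, so it does not count against your proposal specifically.
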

\begin{proof}
We proceed by contradiction and assume $T(I_a) \subset I_b \cup I_c$ so that $I_b \not= I_c$.  By definition, there is no bisector point or preimage of one in the interior of $I_a$, so we know $I_b$ and $I_c$ are adjacent and share a boundary value.  All interval boundary values between adjacent intervals are bisector points or a preimage of $T$ from a bisector point, thus implying that the preimage of the shared boundary point of $I_b$ and $I_c$ is in $I_a$, a contradiction as intervals do not contain preimages of interval boundary points.
\end{proof}

For every triangle $Q$ produced by the lines in $X$, let the interior angles of $Q$ be labelled $\alpha, \beta, \gamma$.  

\begin{definition}
If $I_a \in U$ satisfies the property that for every $x, y \in I_a$ $x \not= y$, then $T^n(x), T^n(y) \in I_a$ so that
\[
n = n(I_a,  \alpha, \beta, \gamma) = \min\{r > 1: T^r(x) \in I_a \}
\]
then we call such intervals \emph{trapping} intervals.
\end{definition}

\begin{lemma}\label{lem5}
In every space $X$ there exists a trapping $I_a \in U$.
\end{lemma}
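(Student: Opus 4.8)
The plan is to reduce the statement to a pigeonhole argument on the induced action of $T$ on the intervals of $U$. By Lemma~\ref{lem4}, each $I_a \in U$ has a unique $I_b \in U$ with $T(I_a) \subset I_b$, the uniqueness coming from the fact that distinct members of $U$ are disjoint; write $\hat{T}(I_a) = I_b$ for the resulting map $\hat{T}\colon U \to U$. The crucial observation I would record first is that all points of one interval share a common symbolic itinerary: by induction on $r$, using Lemma~\ref{lem4} at each step, $T^r(x) \in \hat{T}^r(I_a)$ for every $x \in I_a$ and every $r \geq 0$. In particular, for a fixed $x \in I_a$ the intervals visited by the orbit $\mathcal{O}(x)$ are exactly the iterates $\hat{T}^r(I_a)$.

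Next I would fix an arbitrary $I_a \in U$ (such intervals exist since $X' \neq \emptyset$) and argue that the sequence $I_a, \hat{T}(I_a), \hat{T}^2(I_a), \dots$ takes only finitely many values. Indeed, choosing any $x \in I_a$, Lemma~\ref{lem7} gives $\mathcal{O}(x) \subset \bigcup_{i=0}^m I_i$ for some finite $m$, and by the itinerary observation every $\hat{T}^r(I_a)$ is one of these finitely many intervals. An infinite sequence in a finite set must repeat, so there are $s \geq 0$ and $p \geq 1$ with $\hat{T}^{s+p}(I_a) = \hat{T}^{s}(I_a)$. Setting $I_\ast = \hat{T}^{s}(I_a)$ and taking $p$ minimal, we obtain a genuinely $\hat{T}$-periodic interval, i.e. $\hat{T}^p(I_\ast) = I_\ast$, equivalently $T^p(I_\ast) \subset I_\ast$.

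Finally I would check that $I_\ast$ satisfies the definition of a trapping interval. For any $x \in I_\ast$ the itinerary observation gives $T^r(x) \in I_\ast$ if and only if $\hat{T}^r(I_\ast) = I_\ast$, which by minimality of $p$ holds exactly when $p \mid r$; hence the least $r > 1$ with $T^r(x) \in I_\ast$ is the same integer $n$ for every $x \in I_\ast$ (namely $n = p$ when $p \geq 2$, and $n = 2$ when $p = 1$), and $T^n(I_\ast) \subset I_\ast$. This is precisely the required constancy of the return time $n = n(I_\ast, \alpha, \beta, \gamma)$, so $I_\ast$ is trapping. The one genuinely delicate point, and the step I expect to be the main obstacle, is this last reduction: one must be careful to select an interval lying on the cycle rather than in its pre-period, and to confirm that the first-return time is a true constant on all of $I_\ast$ rather than merely finite for each individual point. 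Both are supplied by the shared-itinerary property, which is therefore the technical heart of the argument.
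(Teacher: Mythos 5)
Your argument is correct and follows essentially the same route as the paper: use Lemma~\ref{lem4} to push intervals forward under $T$, use Lemma~\ref{lem7} to see that only finitely many intervals are visited, and conclude by pigeonhole that some interval satisfies $T^n(I_\ast) \subset I_\ast$. Your write-up is in fact more careful than the paper's sketch---notably the induction giving the shared itinerary $T^r(x) \in \hat{T}^r(I_a)$, the selection of an interval on the cycle rather than in its pre-period, and the verification that the return time is constant on $I_\ast$---but these are refinements of, not departures from, the published argument.
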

\begin{proof}
By definition, intervals do not contain fixed points or preimages of fixed points, so iteration from a initial value in an interval will not map to a fixed point in a finite number of steps.  Further, by Lemma  \ref{lem4}, for every $I_a$ there exists an interval $I_b$ such that $T(I_a) \subset I_b$, $I_a \not= I_b$.  Thus, using the fact provided by Lemma \ref{lem7} that for $x \in I_a$, iteration of $T$ over $x$ may only map into $m$ intervals for finite $m$, there must exist an interval $I_i$ such that $T^n(I_i) \subset I_i$.
\end{proof}

\section{Limiting behavior of $T$}\label{sec2}

In this section we show that for $x \in X'$, $T^k(x)$ converges to a periodic orbit as $k \rightarrow \infty$, except for bifurcation points when the map converges to a fixed point, which will be discussed in Section \ref{sec3}.  If iteration of $T$ converges to a fixed point in space $X'$ then we call the space \emph{degenerate}.  Figure \ref{fig6} offers examples of periodic orbits iteration of $T$ generates for different $X$.

\begin{figure}[b]
\centering
\begin{minipage}[t]{5cm}
    \hspace*{.4cm}
	\includegraphics[scale=0.59]{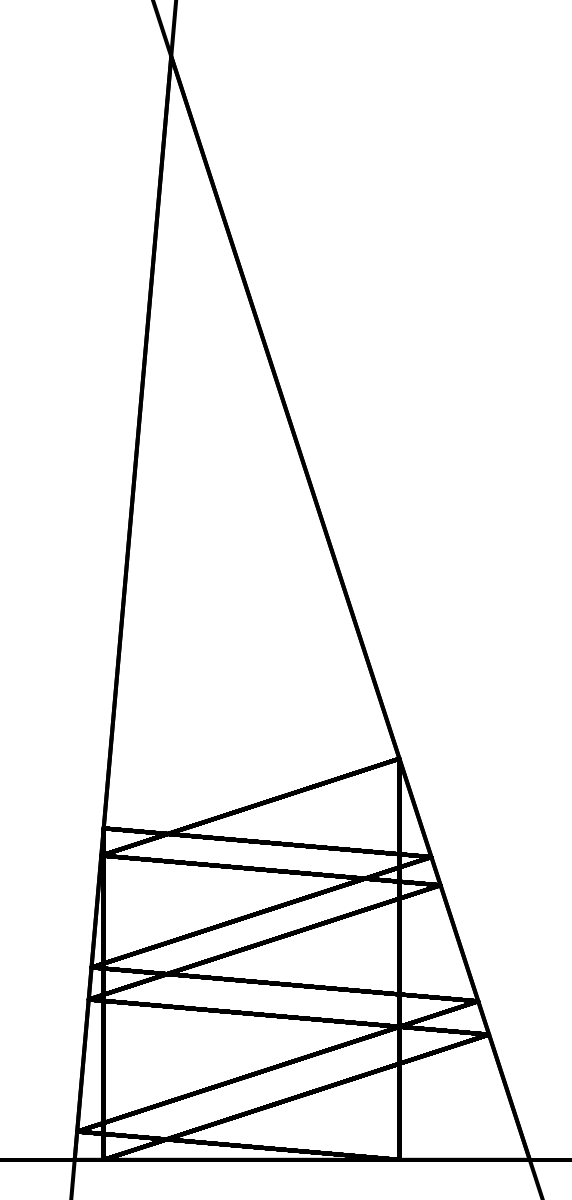}
\end{minipage}
\hspace{3cm}
\begin{minipage}[t]{4cm}
    \hspace*{-4cm}
	\centering
	\includegraphics[scale=0.67]{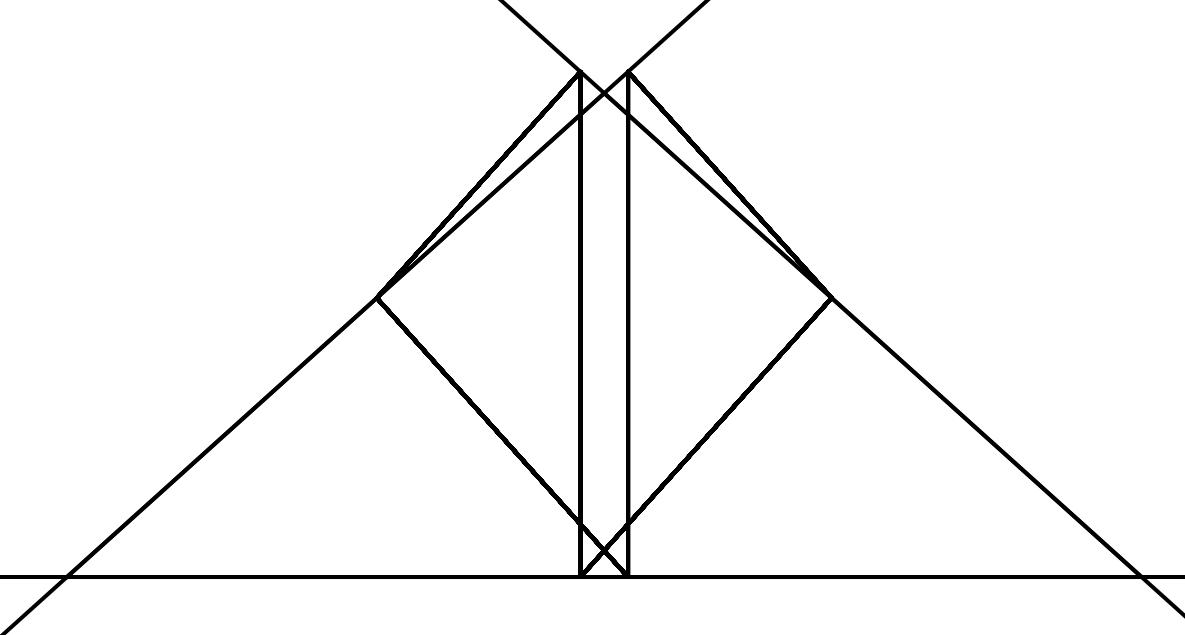}
\end{minipage}
\caption{Periodic orbits generated by $T$ in two different spaces forming scalene triangles.}
\label{fig6}
\end{figure}

\begin{lemma}\label{lem9}
Let $d_s(x, y) = d(T^s(x), T^s(y))$ for $x, y$ in interval $I_a \in U$, and $s = s(I_a, \alpha, \beta, \gamma)$.  Then $d_s(x, y)$ is given by
\begin{equation}\label{eq1}
    d_s(x, y) = \cos^{k_1}(\alpha)\cos^{k_2}(\beta)\cos^{k_3}(\gamma)d(x, y)
\end{equation}
satisfying the condition $k_1 + k_2 + k_3 = s$.  If the triangle $Q = \bigtriangleup ABC$ is acute, then $\alpha, \beta, \gamma$ are the interior angles defining triangle $Q$.  If however $Q$ is obtuse with obtuse angle $\gamma > \pi/2$, replace $\gamma$ with $\gamma' = \pi - \gamma$ in Equation \ref{eq1}.
\end{lemma}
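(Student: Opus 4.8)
The plan is to reduce the $s$-step statement to a single-step computation and then compose. The essential observation is that within a single interval $I_a \in U$ the map $T$ acts by orthogonally projecting onto one \emph{fixed} target line: the points at which the choice of projection switches (i.e.\ where $d(x_1, x_0) = d(x_2, x_0)$) are bisector points, hence fixed points, and by construction no interval contains a fixed point or a preimage of one. Consequently every $x \in I_a \subset L_i$ is sent by $T$ to its perpendicular foot on one and the same line, say $L_j$, and by Lemma \ref{lem4} the whole image $T(I_a)$ lies in a single interval. Iterating and applying Lemma \ref{lem4} at each stage, for any $x, y \in I_a$ the iterates $T^r(x), T^r(y)$ lie in a common interval for every $r$, so they undergo exactly the same sequence of projections. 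This consistency is what makes $d_s(x,y)$ a clean product rather than a piecewise expression.

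First I would establish the single-step contraction factor. Suppose $x, y$ lie on $L_i$ and $T$ sends both to their perpendicular feet $x', y'$ on $L_j$. Orthogonal projection of the plane onto the direction of $L_j$ is linear, and as the difference $x' - y'$ is its linear part applied to $x - y$, it scales the length of the segment $\overline{xy} \subset L_i$ by $|\cos\theta|$, where $\theta$ is the angle between $L_i$ and $L_j$; hence $d(x', y') = |\cos\theta|\, d(x, y)$. (Equivalently this is the adjacent-over-hypotenuse relation in the right triangles used in the proof of Lemma \ref{lem2}.) The lines $L_i$ and $L_j$ meet at one of the vertices $A, B, C$, and the two edges of $Q$ emanating from that vertex lie along $L_i$ and $L_j$; hence $\theta$ equals the interior angle of $Q$ at that vertex or its supplement, and in either case $|\cos\theta|$ is the absolute cosine of the interior angle there.

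Having this, I would compose the factors over the $s$ iterations. Each of the $s$ steps carries one contraction factor of the above form, and the vertex involved at each step is one of the three vertices of $Q$, so each factor is $|\cos\alpha|$, $|\cos\beta|$, or $|\cos\gamma|$. Letting $k_1, k_2, k_3$ denote the number of steps associated with $\alpha, \beta, \gamma$ respectively, the uniform single-step scaling multiplies across iterations to give
\begin{equation*}
d_s(x, y) = |\cos\alpha|^{k_1}|\cos\beta|^{k_2}|\cos\gamma|^{k_3}\, d(x, y),
\end{equation*}
and since every one of the $s$ steps contributes exactly one factor we have $k_1 + k_2 + k_3 = s$. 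If $Q$ is acute all interior angles are less than $\pi/2$, so each cosine is positive and the absolute values may be dropped, recovering Equation \ref{eq1}. If $Q$ is obtuse with obtuse angle $\gamma > \pi/2$, then $\cos\gamma < 0$ while $|\cos\gamma| = \cos(\pi - \gamma) = \cos\gamma'$; replacing $\gamma$ by $\gamma'$ in the obtuse slot restores Equation \ref{eq1} with honest positive cosines.

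The step I expect to be the main obstacle is the bookkeeping in the second paragraph: verifying that the angle governing each projection is genuinely an interior angle of $Q$ (up to supplement), and not some unrelated angle formed by the infinite lines, and confirming that the acute/obtuse dichotomy is the only place a sign correction is needed. Everything else — the uniform per-step scaling and its multiplicativity across iterations — follows directly from the linearity of orthogonal projection together with the interval-consistency furnished by Lemma \ref{lem4}.
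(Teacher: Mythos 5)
Your proposal is correct and takes essentially the same route as the paper: a single-step contraction factor of $\cos$ of the interior angle at the vertex where the source and target lines meet, composed over the $s$ steps (the paper organizes this composition as an induction on $s$, with the per-step factor read off from the right triangle formed by the projection segments, while you obtain it from linearity of orthogonal projection). Your sign bookkeeping via $|\cos\gamma| = \cos(\pi-\gamma) = \cos\gamma'$ is exactly the paper's substitution for the obtuse case, and your use of Lemma \ref{lem4} to ensure both points undergo the same projection sequence matches the paper's appeal to that lemma.
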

\begin{proof}
Let $x, y \in I_a$, so iterations of $T$ over $x, y$ will follow the same trajectory by Lemma \ref{lem4}, and we proceed by induction on Equation \ref{eq1}, noting that by the geometric behavior of the map, Equation \ref{eq1} will never be negative.  For $s=1$, we have one iteration of the map over $x$ and $y$.  Without loss of generality, let the line segments $\overline{xT(x)}, \overline{yT(y)}$ be opposite an angle with value $\alpha$, so a right triangle is formed and we see that 
\[
d_1(x, y) = d(T(x), T(y)) = \cos(\alpha)d(x, y)
\]

Proceeding to assume Equation \ref{eq1} holds for $s=r$, we show it holds for $s=r+1$.  Take the segments $\overline{T^r(x)T^{r+1}(x)}$ and $\overline{T^r(y)T^{r+1}(y)}$ to be opposite an angle with value $\beta$, so a right triangle is formed and 
\[
d_{r+1}(x, y) = d(T^{r+1}(x), T^{r+1}(y)) = \cos^{k_1}(\alpha)\cos^{k_2 + 1}(\beta)\cos^{k_3}(\gamma)d(x, y)
\]
where $k_1 + k_2 + k_3 = r$.  The reason for the substitution $\gamma' = \pi - \gamma$ if $\gamma >\pi/2$, is that $T$ maps to a perpendicular projection of a point onto another line, and thus the mapping cannot be opposite an obtuse angle. 
\end{proof}

\begin{definition}\label{def2}
Let $I_a \in U$ be a trapping interval in nondegenerate space $X$ such that $\hat{T}:I_a \rightarrow I_a$ is the \emph{induced map} over the interval of continuity $I_a$.  If $x, T^n(x) \in I_a$ for minimal $n$, we let $\hat{T}(x) = T^n(x)$ for all $x \in I_a$.
\end{definition}

\begin{theorem}\label{thm1}
$\hat{T}$ is a contraction mapping over the interval of continuity in the nondegenerate space and has a unique fixed point on the interval.
\end{theorem}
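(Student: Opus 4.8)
The plan is to read the contraction constant directly off Lemma \ref{lem9} and then run the contraction mapping principle. Since $I_a$ is trapping, the induced map is simply $\hat{T} = T^n$ restricted to $I_a$, where $n = n(I_a,\alpha,\beta,\gamma) \geq 2$ is the first return time; by Lemma \ref{lem4} all points of $I_a$ march through the same sequence of intervals in lockstep, so this $n$ is a single constant on $I_a$ and $\hat{T}$ is well defined. Hence for any $x, y \in I_a$,
\[
d(\hat{T}(x), \hat{T}(y)) = d(T^n(x), T^n(y)) = d_n(x, y),
\]
and applying Lemma \ref{lem9} with $s = n$ gives $d_n(x,y) = \cos^{k_1}(\alpha)\cos^{k_2}(\beta)\cos^{k_3}(\gamma)\,d(x,y)$ with $k_1 + k_2 + k_3 = n$. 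So the Lipschitz constant is the explicit product $c = \cos^{k_1}(\alpha)\cos^{k_2}(\beta)\cos^{k_3}(\gamma)$, and the theorem reduces to showing $c < 1$ together with the standard fixed point argument.

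First I would verify $0 \leq c < 1$. The interior angles of the nondegenerate triangle $Q$ lie strictly between $0$ and $\pi$, and after the substitution $\gamma \mapsto \pi - \gamma$ prescribed in Equation \ref{eq1} for the obtuse case, each of the three angles entering the formula lies in $(0, \pi/2]$. Thus every cosine base lies in $[0,1)$ (strictly positive unless the triangle is right). Because the exponents are nonnegative and sum to $n \geq 2$, at least one cosine appears with a positive power while every remaining factor is at most $1$; the product therefore satisfies $0 \leq c < 1$, so $\hat{T}$ is a contraction of $I_a$ with ratio $c$.

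To produce the fixed point I would invoke Banach's theorem, the only wrinkle being that $I_a$ is open and hence not complete. I would handle this by extending $\hat{T}$ to the closure $\bar{I_a}$: a contraction is uniformly continuous, so it extends to a contraction $\bar{I_a} \to \bar{I_a}$ with the same ratio $c$, and $\bar{I_a}$ is a complete metric space. Banach's theorem then yields a unique fixed point $p = \lim_{m \to \infty} \hat{T}^m(x_0)$ for any seed $x_0 \in I_a$, with uniqueness immediate from $d(p,q) = d(\hat{T}(p), \hat{T}(q)) \leq c\,d(p,q)$ and $c < 1$.

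The step I expect to require the most care is confirming that $p$ actually lies in the open interval $I_a$ rather than on its boundary, since only then is it a genuine fixed point of the induced map and a true periodic point of $T$. The boundary values of $I_a$ belong to $\Omega$, so each is either a fixed point of $T$ or a strict preimage of one. A strict preimage $\omega$ cannot satisfy $T^n(\omega) = \omega$ and so cannot be fixed by $\hat{T}$; hence the only way $p$ could escape to the boundary is for $p$ to be an honest fixed point of $T$, which is exactly the degenerate situation excluded by hypothesis. Invoking nondegeneracy therefore forces $p \in I_a$, and everything else is the routine machinery of Lemma \ref{lem9} and the contraction mapping principle.
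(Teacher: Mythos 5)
Your proof follows essentially the same route as the paper: both read the Lipschitz constant $\cos^{k_1}(\alpha)\cos^{k_2}(\beta)\cos^{k_3}(\gamma)$ off Lemma \ref{lem9} (noting it is strictly less than $1$ since the exponents sum to $n \geq 2$) and conclude via the contraction mapping principle. If anything you are more careful than the paper's own proof, which silently applies Banach's theorem on the open, hence incomplete, interval $I_a$ and never checks that the fixed point lies in the interior; your extension of $\hat{T}$ to $\overline{I_a}$ and the nondegeneracy argument ruling out boundary fixed points address exactly the points the paper glosses over.
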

\begin{proof}
Let $x, y \in I_a$ with $I_a$ trapping.  We wish to show the following
\[
d(\hat{T}^r(x), \hat{T}^r(y)) \leq C^r d(x, y)
\]
where $r$ is the $r$th iteration of $\hat{T}$.  We set $C = \text{max}(\cos(\alpha), \cos(\beta), \cos(\gamma))$. As such $C < 1$ is a Lipschitz constant, and by Lemma \ref{lem9} we obtain the following.
\[
d(\hat{T}^r(x), \hat{T}^r(y)) =  (\cos^{k_1}(\alpha)\cos^{k_2}(\beta)\cos^{k_3}(\gamma))^rd(x, y) 
= \phi^r d(x, y)
\]
where $k_1 + k_2 +k_3= n$ with $n$ being the $n$th iterate given by $\hat{T}(x) = T^n(x)$.  We have $\phi^r \leq C^r$ so $\hat{T}$ is a contraction mapping with unique fixed point in $I_a$.  
\end{proof}

\begin{cor}\label{cor1}
Let $I_a$ be a trapping interval in any non-degenerate space with $x, T^n(x) \in I_a$ so that $n$ is minimal.  Then $T^k(x)$ will converge to a periodic orbit of period $n$ as $k \rightarrow \infty$.
\end{cor}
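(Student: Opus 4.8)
The plan is to read the corollary off the contraction established in Theorem \ref{thm1}, combined with the piecewise continuity of $T$. First I would record that, because every point of $I_a$ follows the same combinatorial trajectory (Lemma \ref{lem4}), the first-return time $n$ is constant across $I_a$; consequently $\hat{T}(T^n(y)) = T^{2n}(y)$ for any $y\in I_a$, and by induction $\hat{T}^k = T^{kn}$ on $I_a$. By Theorem \ref{thm1}, $\hat{T}$ is a contraction with a unique fixed point $p^\ast \in I_a$, so for the given $x \in I_a$ we have $\hat{T}^k(x) = T^{kn}(x) \to p^\ast$ as $k \to \infty$.

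Next I would verify that $p^\ast$ generates a genuine period-$n$ orbit. Since $\hat{T}(p^\ast) = T^n(p^\ast) = p^\ast$, the point $p^\ast$ is periodic with period dividing $n$. If the period were a proper divisor $d < n$, then $T^d(p^\ast) = p^\ast \in I_a$ would contradict the minimality of $n$ as the first-return time to $I_a$; hence the period is exactly $n$, and I set $P = \{p^\ast, T(p^\ast), \dots, T^{n-1}(p^\ast)\}$ for the resulting orbit.

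The core of the argument is to push the convergence $T^{kn}(x) \to p^\ast$ along the intermediate iterates. Each point of $P$ lies in the interior of a continuity interval of $U$: a period-$n$ point with $n>1$ is neither a fixed point nor a preimage of one, so no element of $\Omega$ lies on the orbit. Consequently $T$ is continuous (indeed locally affine, being a perpendicular projection) in a neighborhood of each point of $P$, and $T^j$ is continuous near $p^\ast$ for every $0 \le j < n$. Therefore, fixing $j$ and letting $k \to \infty$,
\[
T^{kn+j}(x) = T^{j}\bigl(T^{kn}(x)\bigr) \longrightarrow T^{j}(p^\ast).
\]

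Finally I would assemble these $n$ subsequences. The full orbit $\{T^k(x)\}$ is the interleaving of the $n$ sequences $\{T^{kn+j}(x)\}_k$, $0 \le j < n$, and each converges to the corresponding point $T^j(p^\ast) \in P$. Hence $d(T^k(x), P) \to 0$, the $\omega$-limit set of $x$ is exactly $P$, and $T^k(x)$ converges to the period-$n$ orbit $P$. The step I expect to be the main obstacle is the propagation in the third paragraph: one must argue that for all sufficiently large $k$ the whole block $T^{kn+1}(x), \dots, T^{kn+n-1}(x)$ remains inside the continuity intervals visited by the orbit of $p^\ast$, so that the single continuity statement for $T^j$ applies uniformly. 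This is exactly where non-degeneracy matters, since the absence of any element of $\Omega$ on the limiting orbit is what guarantees a genuine neighborhood of $p^\ast$ on which $T^j$ is continuous.
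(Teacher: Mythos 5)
Your proposal is correct and follows essentially the same route as the paper: invoke the contraction property of $\hat{T}$ from Theorem \ref{thm1} to obtain convergence $T^{kn}(x) \to p^\ast$, identify the fixed point of the induced map as a period-$n$ point of $T$, and conclude convergence of the full orbit. The paper's own proof is just two sentences and silently assumes the steps you make explicit (that the period is exactly $n$ by minimality of the return time, and that convergence along the subsequence $T^{kn}(x)$ propagates to the interleaved iterates via local continuity of $T$ away from $\Omega$), so your version is a more careful rendering of the same argument rather than a different one.
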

\begin{proof}
By Theorem \ref{thm1}, $\hat{T}$ is a contraction mapping so $\hat{T}^r(x_0)$ converges to a fixed point as $r \rightarrow \infty$ for $x_0$ in trapping $I_a$.  It then follows that the fixed point of $\hat{T}$ on the trapping interval is a periodic point for $T$, and the corresponding periodic orbit has period $n$ by definition of the induced map given in Definition \ref{def2}.
\end{proof}

\begin{theorem}\label{thm2}
For all $x \in X'$ for non-degenerate $X'$, $T^k(x)$ will converge to a periodic orbit as $k \rightarrow \infty$.
\end{theorem}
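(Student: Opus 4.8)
The plan is to reduce the statement to the trapping-interval case already settled in Corollary~\ref{cor1}, by showing that the orbit of an arbitrary point eventually enters a trapping interval. I would begin by fixing $x \in X'$ and applying Lemma~\ref{lem7}, which confines the entire orbit to a finite union $\bigcup_{i=0}^m I_i$ of intervals in $U$. The idea is then to pass from the point dynamics of $T$ to a dynamical system on this finite collection of intervals.

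To that end I would define a map $\sigma$ on the finite set $\{I_0, \dots, I_m\}$ by setting $\sigma(I_a) = I_b$, where $I_b$ is the interval supplied by Lemma~\ref{lem4} satisfying $T(I_a) \subset I_b$. The one point needing care is that $\sigma$ is genuinely a function, i.e. that $I_b$ is unique: since $I_a$ contains no element of $\Omega$ in its interior, $T$ is continuous on $I_a$, so $T(I_a)$ is connected; as distinct intervals of $U$ are separated by points of $\Omega$ and $T(I_a)$ meets no such point, the image must lie in a single interval. Thus $\sigma$ is a well-defined self-map of a finite set.

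With $\sigma$ in hand, the argument is essentially combinatorial. Let $I_{j_0}$ be the interval containing $x$ and consider the sequence $I_{j_0}, \sigma(I_{j_0}), \sigma^2(I_{j_0}), \dots$; since $\sigma$ acts on a finite set, this sequence is eventually periodic, entering a cycle $I_{a_0} \to I_{a_1} \to \cdots \to I_{a_{n-1}} \to I_{a_0}$ of some length $n$. Iterating the containment of Lemma~\ref{lem4} around this cycle gives $T^n(I_{a_0}) \subset I_{a_0}$, and because Lemma~\ref{lem4} forces all points of $I_{a_0}$ to follow a common itinerary, every $y \in I_{a_0}$ returns to $I_{a_0}$ after exactly $n$ steps; hence $I_{a_0}$ is a trapping interval in the required sense.

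Finally, since the interval itinerary of $x$ eventually reaches the cycle, there is an index $N$ with $T^N(x) \in I_{a_0}$. Applying Corollary~\ref{cor1} to the point $T^N(x)$ in the trapping interval $I_{a_0}$ shows that $T^k\big(T^N(x)\big) = T^{N+k}(x)$ converges to a periodic orbit of period $n$ as $k \to \infty$; the finite shift by $N$ does not affect the limiting behavior, so $T^k(x)$ converges to the same periodic orbit. The main obstacle I anticipate is the passage from the existence statement of Lemma~\ref{lem4} to a well-defined deterministic map $\sigma$ on intervals — establishing uniqueness of the target interval via connectedness — since everything downstream (eventual periodicity, identification of a trapping interval, and invocation of Corollary~\ref{cor1}) follows cleanly once that finite deterministic structure is in place.
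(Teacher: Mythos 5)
Your proposal is correct and follows essentially the same route as the paper: the paper's proof likewise combines Lemma~\ref{lem7} (finiteness of the orbit's intervals) with the argument in Lemma~\ref{lem5} (the forward dynamics on intervals from Lemma~\ref{lem4} must cycle, yielding a trapping interval the orbit enters) and then invokes Corollary~\ref{cor1}. Your version merely makes explicit what the paper leaves terse --- in particular the well-definedness of the interval map $\sigma$ via connectedness, and the pigeonhole argument for eventual periodicity --- which is a faithful elaboration rather than a different approach.
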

\begin{proof}
By consequence of Lemma \ref{lem7} and proof of Lemma \ref{lem5}, we have that for all $x \in X'$, there exists a $T^k(x) \in I_a$ for trapping $I_a$ and finite $k$.  But if $T^k(x) \in I_a$ for trapping $I_a$, then by Corollary \ref{cor1} the map must converge to a periodic orbit.
\end{proof}

Figure \ref{fig2} provides visual demonstration of convergence to a periodic orbit as proved in Theorem \ref{thm2}.  The red lines indicate iterations of the map converging to the orbit, and the black path traces the periodic orbit.

\begin{figure}
    \centering
    \includegraphics[scale=.75]{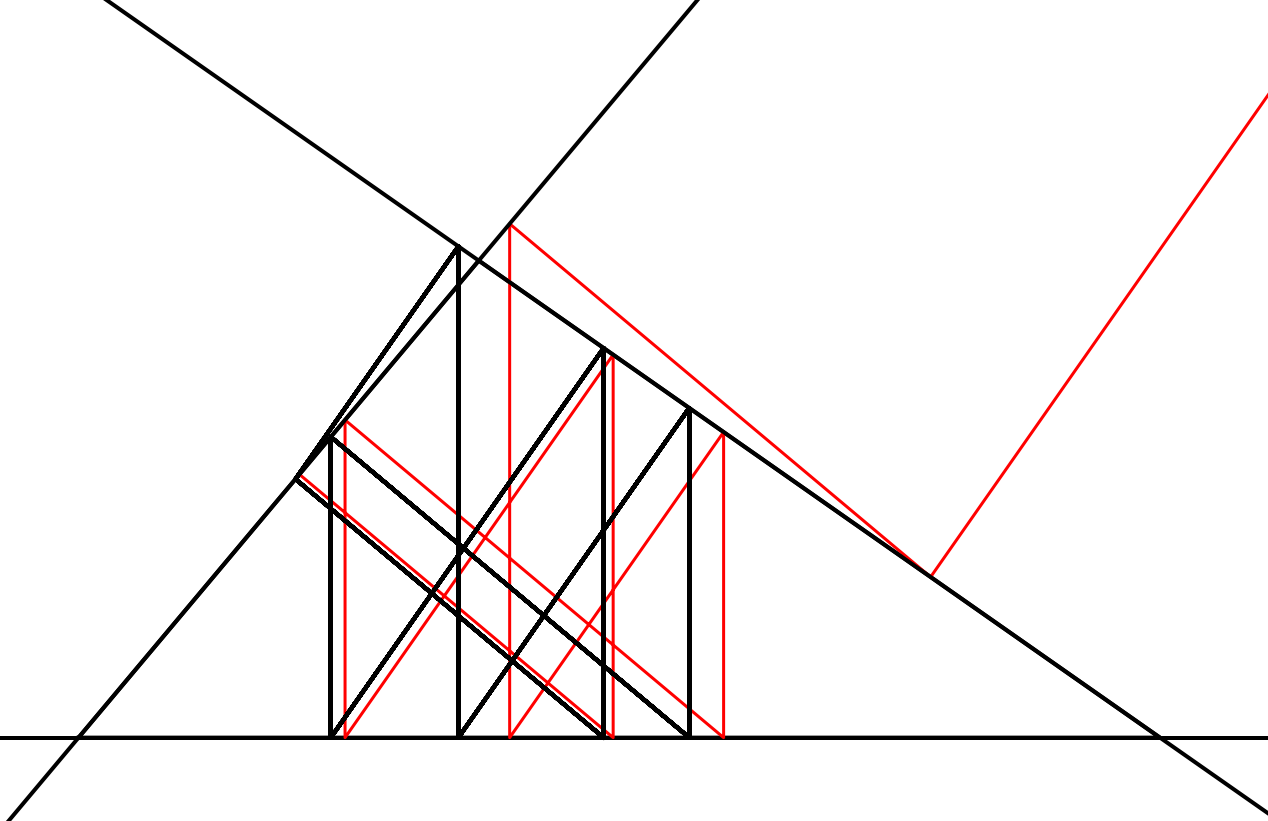}
    \caption{}
    \label{fig2}
\end{figure}

\section{Periodic orbits in isosceles triangles}\label{sec3}

For the purpose of this section we restrict the orbit of $T$ to the boundary of the triangle so that $T:\partial Q \rightarrow \partial Q$, and we take every initial point of the map to be in a trapping interval.  As such, in the case when $Q$ is obtuse and there only exists one possible perpendicular projection from a particular point to an edge, let $T$ map to that edge.  That trapping intervals still exist is clear: such restriction serves to disregard edge cases unimportant to this analysis when the map has periodic points outside of the triangle boundary.  Let triangle $Q$ be isosceles with interior angles $\alpha, \beta, \gamma$ so that $\beta =\gamma = \frac{\pi-\alpha}{2}$, and we let $\alpha$ be an adjustable parameter.

\begin{lemma}\label{lem6}
If $0 < \alpha < \pi$ is the unique angle of an isosceles triangle $Q$ over which $T$ generates a periodic orbit of period $n$, then there must exist some $\mu \in (0, \alpha)$ so the triangle with unique angle $\alpha - \mu$ admits a periodic orbit with period $n' \not= n$.
\end{lemma}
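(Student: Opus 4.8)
The plan is to argue by contradiction. Suppose the conclusion fails for some apex angle $\alpha$: the isosceles triangle with unique angle $\alpha$ produces a period-$n$ orbit, and for every $\mu \in (0,\alpha)$ the triangle with unique angle $\alpha-\mu$ also produces an orbit of the same period $n$. Writing $\alpha'$ for the variable apex angle (so $\beta'=\gamma'=\tfrac{\pi-\alpha'}{2}$), this asserts that the period function $\alpha' \mapsto n(\alpha')$ is constant, equal to $n$, on all of $(0,\alpha)$. I would contradict this by following the periodic orbit as $\alpha' \to 0^+$ and showing that its combinatorial return structure cannot survive down to the degenerate limit.

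First I would reduce to a single combinatorial orbit type. By Lemma \ref{lem9} a period-$n$ orbit uses exactly $n$ projections, split as $k_1+k_2+k_3=n$ among the three angle types, so only finitely many triples $(k_1,k_2,k_3)$ and edge-itineraries can realize period $n$. The periodic point is the fixed point of the induced contraction $\hat{T}$ (Theorem \ref{thm1}, Corollary \ref{cor1}); as long as the itinerary is fixed, the perpendicular-projection formulas together with the per-step factors of Lemma \ref{lem9} express both this fixed point $p(\alpha')$ and the bounding $\Omega$-points (bisector points and their $T$-preimages) of its trapping interval $I_a(\alpha')$ as real-analytic functions of $\alpha'$. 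A fixed itinerary can be destroyed only when $p(\alpha')$ meets an endpoint of $I_a(\alpha')$, i.e.\ meets a point of $\Omega$, so by analyticity these bifurcation values are isolated in $(0,\alpha)$; passing to a subinterval $(0,\alpha_1)$ I may then assume a single itinerary persists with $p(\alpha')$ interior to $I_a(\alpha')$ throughout (the possibility that such meetings accumulate at $0$ is flagged below).

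Next I would analyze the degeneration $\alpha' \to 0^+$. Here $\cos\beta' = \cos\gamma' = \sin(\alpha'/2) \to 0$, so every projection toward a base vertex is a strong contraction, while projections toward the apex have factor $\cos\alpha' \to 1$. I would compute the limiting normalized positions (distance from a fixed vertex divided by the corresponding edge length) of the periodic point $p(\alpha')$ and of the endpoints of $I_a(\alpha')$, and show that these limits force $p(\alpha')$ to cross an endpoint for some $\alpha' \in (0,\alpha_1)$: the strongly contracting base projections collapse the relevant iterates toward $B$ and $C$ at a rate incompatible with the fixed itinerary's minimal return time remaining $n$. Such a crossing gives $p(\alpha') \notin I_a(\alpha')$, so it cannot be the interior fixed point of an induced map of period $n$, contradicting constancy of the period and yielding a $\mu \in (0,\alpha)$ with $n' \neq n$.

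The main obstacle is precisely this limiting/crossing step: controlling, simultaneously and uniformly, the motion of the periodic point and of the trapping-interval endpoints as the triangle degenerates, certifying that they cross rather than asymptote to a common boundary value, and ruling out (or exploiting) an accumulation of itinerary switches at $0$. A useful auxiliary tool is the turning invariant of the closed orbit polygon: since consecutive orbit segments are perpendicular to successive edges, the total turning of a fixed itinerary is $2\pi$ times a fixed integer, which forces a linear relation among $k_1,k_2,k_3$ (after accounting for orientation signs) whose compatibility with a legitimate minimal-$n$ trapping return breaks down as $\alpha' \to 0$. Establishing that incompatibility rigorously is the crux; by contrast, the reductions in the earlier paragraphs are routine consequences of Lemmas \ref{lem9} and \ref{lem4} and Theorem \ref{thm1}.
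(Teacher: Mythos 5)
Your overall strategy coincides with the paper's: argue by contradiction, track the periodic point against the bisector points (and their $T$-preimages) bounding its trapping interval as the apex angle shrinks, and conclude that a crossing forces an extra step in the itinerary and hence a new period. But your proposal does not actually prove the lemma: the decisive step --- certifying that the periodic point and an $\Omega$-point genuinely cross, rather than drifting in tandem or asymptoting to a common limit --- is exactly the step you yourself flag as ``the crux'' and leave open. A proof plan whose load-bearing claim is announced as an unresolved obstacle is a gap, not a proof; everything you do establish (finiteness of itineraries realizing period $n$ via Lemma \ref{lem9}, analytic dependence of the fixed point of $\hat{T}$ and of the trapping-interval endpoints on the apex angle, isolation of itinerary switches on a subinterval) is preparatory and never produces the contradiction.

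What the paper uses to close this step, and what your degeneration analysis at $\alpha' \to 0^+$ misses, is a simple monotonicity observation specific to the isosceles family: since $\beta = \gamma = \frac{\pi - \alpha}{2}$, increasing $\mu$ increases the base angles, and consequently the bisector points of $\beta$ and $\gamma$ move monotonically away from the triangle base along their edges. The paper then concludes that for sufficiently large $\mu$ a bisector point and a periodic point must switch orientation on an edge, forcing at least one additional iteration before return to the trapping interval and hence a period $n' \neq n$. (One may fairly object that the paper's own write-up is terse --- it does not verify that the periodic point cannot retreat in step with the bisector point --- but it at least identifies the concrete geometric mechanism and the direction of motion, whereas your sketch replaces it with a harder uniform-control problem at the degenerate limit plus an unproven ``turning invariant'' incompatibility.) If you want to repair your argument, drop the limit analysis and instead quantify the sweep of the base-angle bisector points: as $\mu$ ranges over $(0,\alpha)$ they traverse a definite portion of each edge, and a fixed combinatorial orbit confined to a trapping interval bounded by those $\Omega$-points cannot survive that sweep.
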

\begin{proof}
Let the triangle \emph{base} be the edge opposite $\alpha$.  We proceed by contradiction and assume the orbit with period $n$ in the isosceles triangle determined by $\alpha$ exists in all isosceles triangles with unique angle $\alpha - \mu < \alpha$.  Increase in value of $\mu$ corresponds with increase in value of $\beta, \gamma$, so that the bisector points of $\beta$ and $\gamma$ shift away from the triangle base.  With this fact we obtain our contradiction, for there must exist some sufficiently large $\mu$ such that a bisector point and periodic point switch orientation on the edge, thereby forcing the addition of at least one more step in the iteration of the map before return to a trapping interval, implying a new orbit with distinct period.
\end{proof}

Immediate from Lemma \ref{lem6} we obtain the following.

\begin{prop}\label{prop3}
For each distinct isosceles triangle with parameter $\alpha$ where $0 < \alpha < \pi$, there exists an open interval denoted $(\alpha_{j}, \alpha_{j-1})$, where if $\alpha \in (\alpha_{j}, \alpha_{j-1})$ then iteration of $T$ must map to a particular set of periodic orbits with period unique to the interval.
\end{prop}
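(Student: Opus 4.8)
The plan is to read this proposition as the assertion that the parameter range $(0,\pi)$ decomposes into open subintervals on each of which the period of the attracting orbit is constant, with the endpoints $\alpha_j$ being precisely the bifurcation values supplied by Lemma \ref{lem6}. Fix an $\alpha$ over which $T$ generates a periodic orbit of period $n$; by Theorem \ref{thm2} and Corollary \ref{cor1} this period equals the return time $n(I_a,\alpha,\beta,\gamma)$ of a trapping interval $I_a$. The orbit's \emph{itinerary} --- the finite sequence of edges onto which successive projections land before the orbit first returns to $I_a$ --- is determined entirely by the strict inequalities $d(x_1,x_0) > d(x_2,x_0)$ of Definition \ref{def1}, together with the relative positions of the periodic point and the bisector points along each edge.

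First I would establish that the period is locally constant in $\alpha$. Since $Q$ is isosceles with $\beta = \gamma = (\pi-\alpha)/2$, every competing projection distance that enters the itinerary is a real-analytic function of the single parameter $\alpha$, as are the positions of the periodic and bisector points (the latter arising as the fixed point of the affine contraction $\hat{T}$, whose coefficients depend analytically on $\alpha$ through the product of cosines of Lemma \ref{lem9}). At a value of $\alpha$ for which every inequality governing the itinerary is strict, these finitely many strict inequalities persist under a sufficiently small perturbation; hence the itinerary, and therefore the return time $n$, is unchanged on a neighborhood of $\alpha$. This shows the set of parameters yielding period $n$ is open.

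Next I would identify the boundary values of these open sets as the bifurcation values. A change of itinerary can occur only when the strict ordering degenerates --- either two candidate projections become equidistant, so the point is sent into $\Omega$, or a periodic point crosses a bisector point so that an additional iteration is forced before return to the trapping interval. This is exactly the mechanism isolated in the proof of Lemma \ref{lem6}, where increasing $\mu$ shifts the bisector points of $\beta$ and $\gamma$ until a periodic point and a bisector point switch orientation on an edge. Because the underlying distance functions are real-analytic in $\alpha$ and not identically equal, these critical values are isolated, so each connected component of a constant-period set is an open interval.

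Finally, Lemma \ref{lem6} guarantees these intervals are bounded: given period $n$ at $\alpha$, there exists $\mu \in (0,\alpha)$ with $\alpha - \mu$ producing a distinct period $n' \neq n$, so the component containing $\alpha$ cannot extend to $0$ and must terminate at a bifurcation value; writing its endpoints as $\alpha_j < \alpha_{j-1}$ yields the asserted interval $(\alpha_j,\alpha_{j-1})$ with period unique to it. The main obstacle is the local-constancy step, which the "immediate" framing quietly presumes: one must verify that only finitely many strict inequalities control the itinerary and that the trapping interval together with its return map varies continuously with $\alpha$, so that $n$ jumps only on the discrete bifurcation set. The delicate case is when the orbit passes near a bisector point, since that is precisely where a small change in $\alpha$ can lengthen the return time.
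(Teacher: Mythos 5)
Your proposal is correct in outline, but it takes a genuinely different --- and substantially more complete --- route than the paper, which offers no proof at all: Proposition \ref{prop3} is introduced there with the single phrase ``Immediate from Lemma \ref{lem6}.'' The paper's implicit argument is that since decreasing $\alpha$ eventually forces a period change (Lemma \ref{lem6}, via a periodic point and a bisector point exchanging order on an edge), the parameter range must break into intervals of constant period; this silently assumes exactly what you isolate as the crux, namely that the period is \emph{locally constant} between such crossings. Your persistence argument --- the itinerary is governed by finitely many strict inequalities among distances that depend continuously (indeed piecewise analytically) on $\alpha$, so strict inequalities at the attracting orbit survive small perturbations, making each constant-period set open --- is the missing step, and your identification of the degeneracies (equidistant competing projections, or a periodic point crossing a bisector point) matches the mechanism in the paper's Lemma \ref{lem6}. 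Two soft spots remain in your sketch, both worth flagging: the isolation of bifurcation values rests on an asserted non-degeneracy (that the relevant analytic difference functions are not identically zero on a subinterval), which you state but do not verify; and Lemma \ref{lem6} is one-sided, giving a period change only at some $\alpha - \mu < \alpha$, so it bounds the constant-period component from below but says nothing about the upper endpoint --- your claim that the component ``must terminate at a bifurcation value'' on both sides needs either a symmetric version of the lemma or an appeal to openness plus the finiteness of periods realized near $\pi$. Neither gap is fatal, and your writeup is strictly stronger than the paper's treatment, which asserts the partition into intervals $(\alpha_j, \alpha_{j-1})$ without proving local constancy at all.
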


The boundary values of such intervals $(\alpha_{j}, \alpha_{j-1})$ as described in Proposition \ref{prop3} are bifurcation points for the system. 

\begin{lemma}\label{lem10}
When $\alpha = \alpha_j$, a bifurcation point, then $T^k(x)$ converges to a bisector point as $k \rightarrow \infty$.
\end{lemma}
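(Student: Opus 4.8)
The plan is to combine continuity of the periodic point in the parameter $\alpha$ with the fact, noted just after the definition of bisector points, that every bisector point is a fixed point of $T$. Since we have restricted to $\partial Q$, the only fixed points on the boundary are bisector points, so it suffices to show that the limiting orbit collapses onto a fixed point lying on $\partial Q$. I would fix the open interval $(\alpha_j, \alpha_{j-1})$ of Proposition \ref{prop3} whose left endpoint is $\alpha_j$, and for $\alpha$ in this interval let $p(\alpha)$ be the unique fixed point of the induced contraction $\hat{T}$ on the trapping interval supplied by Theorem \ref{thm1}; by Corollary \ref{cor1} every orbit converges to the period-$n$ orbit through $p(\alpha)$. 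First I would check that $p(\alpha)$ varies continuously with $\alpha$: the triangle, its edges, and hence the perpendicular projections defining $T$ all vary continuously with $\alpha$, so for a fixed itinerary $\hat{T}$ varies continuously, and the fixed point of a contraction whose constant $C = \max(\cos\alpha, \cos\beta, \cos\gamma)$ stays bounded away from $1$ varies continuously with the map. Hence each periodic point $T^i p(\alpha)$ moves continuously in $\alpha$.

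Next I would identify the limit of these points as $\alpha \to \alpha_j$. The proof of Lemma \ref{lem6} shows that the bifurcation at $\alpha_j$ is exactly the configuration in which a periodic point and a bisector point $H$ exchange order along an edge, and at the instant of exchange the two coincide. Thus as $\alpha \to \alpha_j$ one periodic point $T^i p(\alpha)$ converges to $H$, and by continuity the periodic point of the limiting configuration is $H$ itself.

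It then remains to conclude convergence to $H$ at $\alpha = \alpha_j$. Because $H$ is a bisector point it is fixed by $T$, hence by $T^n = \hat{T}$. The contraction estimate of Lemma \ref{lem9} is a product of cosines taken along the itinerary and is insensitive to the degeneracy of the configuration, so $\hat{T}$ still obeys $d(\hat{T}^r(x), \hat{T}^r(y)) \leq C^r d(x, y)$ and its unique fixed point must be $H$. Therefore $\hat{T}^r(x) \to H$ for every $x$ in the trapping interval, and since $H$ is fixed under $T$ the intervening iterates $T^i(\hat{T}^r(x))$ converge to $T^i(H) = H$ as well, giving $T^k(x) \to H$.

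The step I expect to be the main obstacle is this last one, since Theorem \ref{thm1} and Corollary \ref{cor1} are stated for nondegenerate spaces, whereas $\alpha = \alpha_j$ is precisely the degenerate value at which the fixed point of $\hat{T}$ has migrated to the boundary of the trapping interval and merged with the fixed point $H$. The care needed is in showing that the cosine-product contraction of Lemma \ref{lem9} survives this degeneracy — equivalently, that passing to the limit $\alpha \to \alpha_j$ from inside the open interval commutes with iteration because $H$, as a genuine fixed point, is stable under the limiting process.
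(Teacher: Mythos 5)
Your overall route is the same as the paper's: the paper's two-sentence proof likewise argues that the period change at $\alpha_j$ is caused by a periodic point colliding with a bisector point, and concludes that at the bifurcation value the orbit converges to that bisector point. Your additions --- continuity of the fixed point of $\hat{T}$ in $\alpha$, and the observation that the contraction estimate of Lemma \ref{lem9} survives the degeneracy --- are more detailed than anything the paper writes down, and the second of these is confirmed by the paper's remark immediately after Lemma \ref{lem10}, which notes that at a bifurcation the trapping interval becomes half-open with the bisector point $H$ as a boundary point and as the unique fixed point of the induced map. So the obstacle you predicted (the contraction failing in the degenerate space) is real but benign, exactly as you suspected.

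However, your final step contains a genuine error, and it is not the one you flagged. You write that since $H$ is fixed under $T$, the intervening iterates $T^i(\hat{T}^r(x))$ converge to $T^i(H) = H$. This tacitly uses continuity of $T$ at $H$, and $T$ is discontinuous at every fixed point: by Definition \ref{def1}, $T(H) = H$ precisely because the two projection distances are equal there, but for $x$ near $H$ inside the trapping interval, $T(x)$ is the perpendicular projection of $x$ onto one fixed line among the other two, so $T(x)$ converges to the projection of $H$ onto that line --- a point at positive distance from $H$, since $H$ does not lie on the other two lines. Consequently the intermediate iterates $T^i(\hat{T}^r(x))$ converge to the points $z_i$ obtained by applying the branch projections successively to $H$ (with $z_0 = z_n = H$ but $z_i \not= H$ for $0 < i < n$), and the full sequence $T^k(x)$ accumulates on the entire ``ghost cycle'' $\{z_0, z_1, \dots, z_{n-1}\}$ rather than converging to $H$, unless the orbit happens to land exactly on $H$ in finitely many steps. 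What your contraction argument genuinely proves is that the return subsequence $\hat{T}^r(x) = T^{rn}(x)$ converges to $H$; on a strict reading this is also all that the paper's own terse proof establishes, so the literal full-sequence convergence asserted in Lemma \ref{lem10} requires either this weaker interpretation or an additional argument that neither you nor the paper supplies.
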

\begin{proof}
Period change of a particular orbit indicates inflection of a periodic point over a bisector point, which must have occurred at some boundary value of an $(\alpha_{j}, \alpha_{j-1})$ interval.  So when $\alpha$ is equal to a boundary value of a such an interval the orbit must converge to a bisector point, which is a fixed point.
\end{proof}

As a consequence of Lemma \ref{lem10}, we have that trapping intervals in a degenerate space may contain one of their boundary points should that boundary point be a bisector point, so that the unique fixed point the induced map converges to is a bisector point.  As such, with Lemma \ref{lem10} we may establish an immediate corollary to Theorem \ref{thm2}.

\begin{cor}
For all $x \in X$, $T^k(x)$ will converge to a fixed point or periodic orbit as $k \rightarrow \infty$, or iteration of $T$ will map to a fixed point in a finite number of iterations.
\end{cor}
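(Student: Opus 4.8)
The plan is to reduce the statement to a case analysis on the location of $x$ within the decomposition $X = X' \cup \Omega \cup \{A, B, C\}$ and to invoke the result already established for each piece; the only genuinely new content is reconciling the degenerate case, which is where I expect the subtlety to lie.

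First I would dispose of the two elementary pieces. If $x \in \Omega$, then by the definition of $\Omega$ the point $x$ is either a fixed point or a preimage under $T$ of a fixed point, so there is a finite $k$ with $T^k(x)$ equal to a fixed point, after which the orbit is constant; this is precisely the third alternative of the statement. For a vertex $x \in \{A, B, C\}$, say $A = L_1 \cap L_2$, one of the two perpendicular projections of $A$ is $A$ itself (distance zero) while the other is the foot on the remaining line $L_3$, which by nonconcurrency is at positive distance; hence a single application of $T$ sends $A$ onto $L_3$, placing its image in $X' \cup \Omega$ (or, at worst, on another vertex, from which the same reasoning applies finitely often), thereby reducing this case to the others.

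Next I would treat $x \in X'$ by splitting on whether the space is degenerate. If $X'$ is nondegenerate, Theorem \ref{thm2} applies directly and yields convergence of $T^k(x)$ to a periodic orbit, the second alternative. If $X'$ is degenerate, I would combine Lemma \ref{lem10} with the observation recorded after it: by Lemma \ref{lem7} and the construction underlying Lemma \ref{lem5}, some iterate $T^k(x)$ lands in a trapping interval $I_a$, and the induced map $\hat{T}$ on $I_a$ from Theorem \ref{thm1} is a contraction with a unique fixed point; in the degenerate case that fixed point is forced to lie on the boundary of $I_a$ and to be a bisector point, hence a genuine fixed point of $T$. Consequently $T^k(x)$ converges to that fixed point, the first alternative. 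Taking the union of the three conclusions gives the stated trichotomy for every $x \in X$.

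The main obstacle is the degenerate case, specifically the assertion that the unique fixed point of the induced contraction coincides with a bisector point rather than an interior periodic point. This is exactly the content packaged by Lemma \ref{lem10}, whose force comes from a periodic point and a bisector point exchanging orientation as the parameter crosses a bifurcation value; the corollary's role is merely to fuse that conclusion with the contraction furnished by Theorem \ref{thm1} so that it holds uniformly in the initial point. Once the cases above are assembled, no further estimation is required, so the remaining work is purely organizational rather than analytic.
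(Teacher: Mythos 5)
Your proposal is correct and follows essentially the same route as the paper, which presents this corollary without a separate proof as an immediate consequence of Theorem \ref{thm2} (nondegenerate case: convergence to a periodic orbit) combined with Lemma \ref{lem10} and the remark following it (degenerate case: the trapping interval's induced contraction has its unique fixed point at a boundary bisector point). Your explicit disposal of the cases $x \in \Omega$ and $x \in \{A, B, C\}$ merely fills in organizational details the paper leaves implicit, and your identification of the degenerate case as the only substantive point matches the paper's own framing.
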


\begin{figure}
    \hspace*{-1.9cm} 
    \includegraphics[scale=.26]{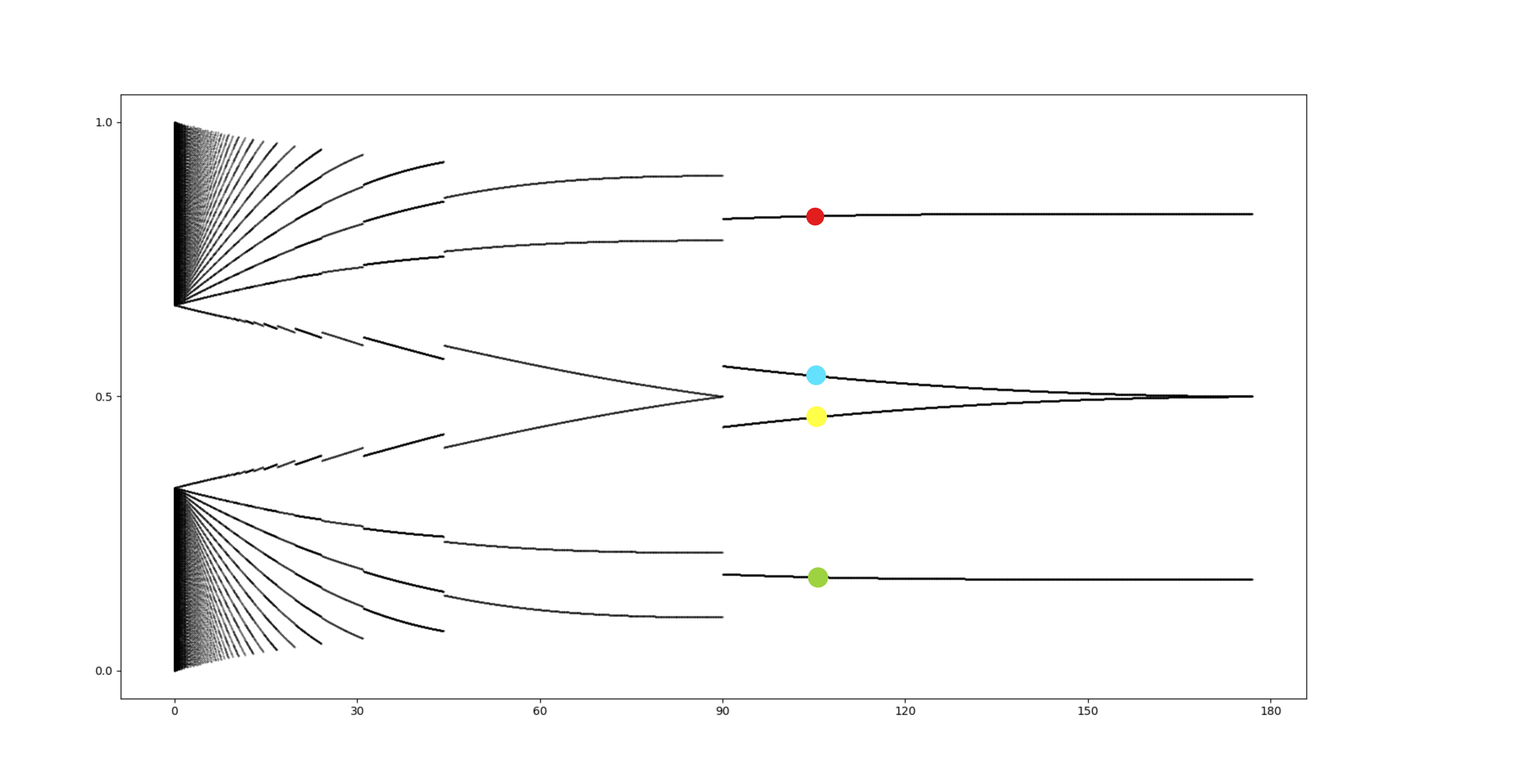}
    \caption{Bifurcation diagram, change of $\alpha$ is in degrees.}
    \label{fig8}
\end{figure}

\begin{figure}
    \centering
    \includegraphics[scale=.08]{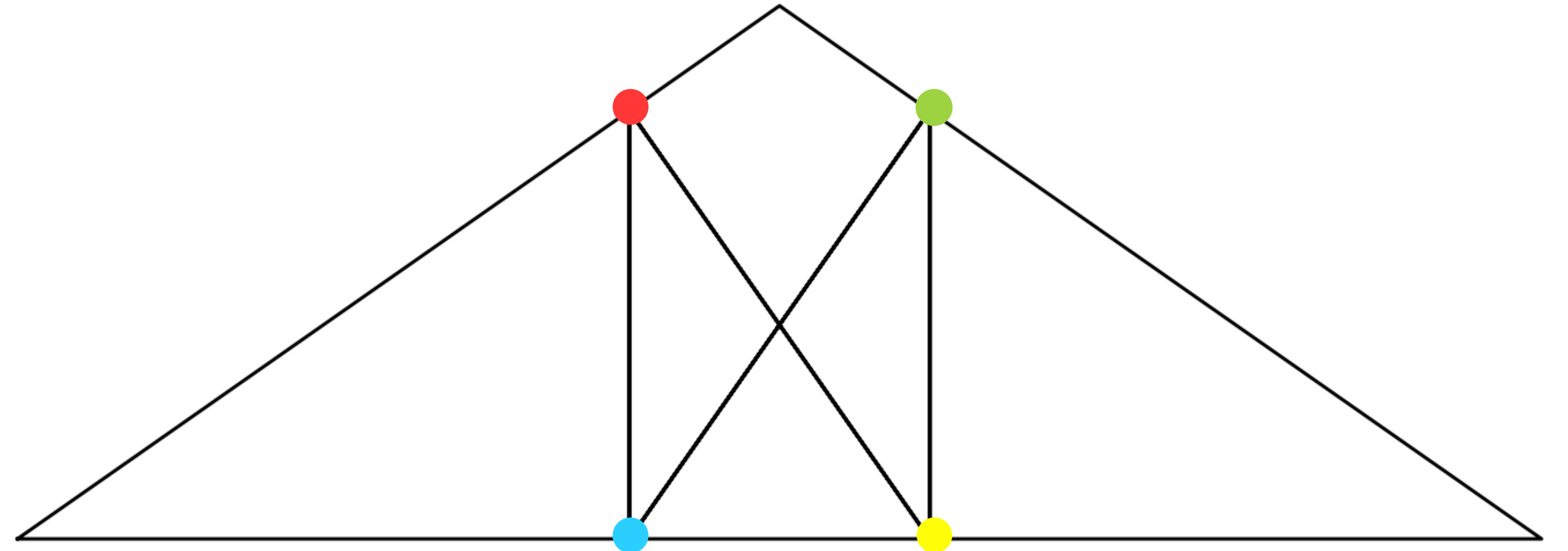}
    \caption{Orbit with colored periodic points corresponding to the  bifurcation diagram.}
    \label{fig9}
\end{figure}

\begin{remark}
For a particular isosceles triangle $Q$, if iterations of the map $T$ converge to an orbit with odd period, then the periodic orbit has two possible orientations in $Q$.
\end{remark}

Figure \ref{fig8} gives a bifurcation diagram for the application of $T$ to isosceles triangles for change of the parameter $\alpha$ along the $x$ axis.  Figure \ref{fig9} offers explicit demonstration of how the periodic points of each orbit were graphed through ``unfolding" the triangle and plotting the periodic points. The $y$ axis represents triangle perimeter distance between periodic points, where the length of the triangle base is fixed to $1/3$.  The bifurcation diagram plots the periodic points of both possible orientations of odd period orbits.

The dynamics of the map $T$ applied to the space of three nonconcurrent, pairwise nonparallel lines leads to questions concerning the dynamics of the map when generalized to different spaces such as non-Euclidean planar triangles.  Would iteration of the map over the boundary of a non-Euclidean, non-convex triangle generate periodic orbits, or would the dynamical system be topologically transitive over a particular subset of the space?  In fact, we may also ask how the asymptotic behavior of the map would change when applied to $n > 2$ dimensional convex polytopes, where rather than mapping to ``farthest" lines, the map iterates to ``farthest" faces, normal to the face.

\end{document}